\documentclass[12pt]{amsart}

% for accents
\usepackage[utf8]{inputenc}
\usepackage[T1]{fontenc}

\usepackage{amsmath, amsfonts, amsthm, amssymb}
\usepackage[top=1in, bottom=1in, left=1in, right=1in]{geometry}
\usepackage[backref=page, colorlinks=true, allcolors=blue]{hyperref}
\usepackage[alphabetic,abbrev,lite,backrefs]{amsrefs} % for bibliography 
\usepackage{tikz}
\usepackage{xcolor}
\usepackage{changepage}
\usepackage{mathrsfs}
\usepackage{tikz-cd}
\usepackage{xargs}
\usepackage{mathtools} % just for defining the symbol :=

% Theorem Environments
\theoremstyle{plain}
	\newtheorem{theorem}{Theorem}[section]
	\newtheorem{lemma}[theorem]{Lemma}

\theoremstyle{definition}
    \newtheorem{defn}[theorem]{Definition}
    \newtheorem{example}[theorem]{Example}
    
\theoremstyle{remark}
	\newtheorem{remark}[theorem]{Remark}
	\newtheorem*{remark*}{Remark}

%The below gives us a command \Set{ a\inA \given a<2} that writes set builder notation. \Set* pairs appropriately sized delimiters
\usepackage{mathtools}
\newcommand\SetSymbol[1][]{\nonscript\:#1\vert\allowbreak\nonscript\:\mathopen{}}
\providecommand\given{} % to make it exist
\DeclarePairedDelimiterX\Set[1]\{\}{\renewcommand\given{\SetSymbol[\delimsize]}#1}

\newcommand{\ms}[1]{\mathscr{#1}}
\newcommand{\mc}[1]{\mathcal{#1}}
\newcommand{\coker}[0]{\text{coker }}

\newcommand{\ideal}[1]{\langle #1 \rangle}

\def\Z{{\mathbb Z}}

\def\P{{\mathbb P}}

\def\O{{\mathcal O}}

\DeclareMathOperator{\Proj}{Proj}

\DeclareMathOperator{\Sym}{Sym}

\DeclareMathOperator{\rank}{rank}
\DeclareMathOperator{\depth}{depth}

\DeclareMathOperator{\Pic}{Pic}
\DeclareMathOperator{\Cox}{Cox}

\newcommand{\tensor} {\otimes}

\newcommand{\ds}{\displaystyle}

\title{Virtual criterion for generalized Eagon-Northcott complexes}
\thanks{Booms-Peot is supported by NSF GRFP fellowship DGE-1747503 and by the Graduate School and OVCRGE at UW-Madison with funding from the Wisconsin Alumni Research Foundation. Cobb is partially supported by a DoD NDSEG fellowship.}
\keywords{virtual resolutions, toric varieties, free resolutions}
\subjclass{13D02, 14M25}
% \date{August 20, 2021}

\author{Caitlyn Booms-Peot}
\address{Department of Mathematics, University of Wisconsin, Madison, WI}
\email{cbooms@math.wisc.edu}

\author{John Cobb}
\address{Department of Mathematics, University of Wisconsin, Madison, WI}
\email{jcobb2@math.wisc.edu}

\begin{document}

\maketitle
\vspace{-0.3in}
\begin{abstract}
    Given any map of finitely generated free modules, Buchsbaum and Eisenbud define a family of generalized Eagon-Northcott complexes associated to it \cite{BE75}. We give sufficient criterion for these complexes to be virtual resolutions, thus adding to the known examples of virtual resolutions, particularly those not coming from minimal free resolutions.
\end{abstract}
% \tableofcontents

\setcounter{section}{1}
\section*{Introduction}

The Eagon-Northcott complex of a matrix has been an object of interest since its introduction in \cite{Eagon1962IdealsDB}, where the authors showed that it is a minimal free resolution of the ideal of maximal minors of the matrix if the depth of this ideal is the greatest possible value. In 1975, Buchsbaum and Eisenbud described a family of generalized Eagon-Northcott complexes associated to a map of free modules, which are free resolutions if the depth of the ideal of maximal minors of the matrix is as large as possible \cites{BE75,Eisenbud2004}. Our main result provides an analogue in the setting of virtual resolutions. 

Let us recall the setup of virtual resolutions. Let $X$ be a smooth projective toric variety over an algebraically closed field $k$, and let $S$ be its $\Pic(X)$-graded Cox ring, which is a multigraded polynomial ring with irrelevant ideal $B$, as defined in \cite{Cox1993TheHC}. In 2017, Berkesch, Erman, and Smith formalized the notion of \textit{virtual resolutions} as a natural analogue to minimal graded free resolutions for smooth projective toric varieties \cite{BerkeschErman2017VirtualRF}.\footnote{Even without a formal definition, this notion appeared in the literature prior \cites{Maclagan2003MultigradedCR, Ein2018SyzygiesOP}.} In exchange for allowing some higher homology supported on the irrelevant ideal $B$, virtual resolutions tend to be shorter and to better capture geometrically meaningful properties of $S$-modules, such as unmixedness, well-behavedness of deformation theory, and regularity of tensor products. 

Despite their utility, there continues to be a lack of families of examples of virtual resolutions. Our work adds a new method for explicitly constructing virtual resolutions, and it adds to the growing literature on developing virtual analogues of classical homological results \cites{Berkesch2020HomologicalAC ,YangVirtualMonomial, harada2021virtual,Dowd2019VIRTUALRO, Loper2019WhatMA, kenshur2020virtually,GAO2021106473,bruce2021characterizing}. 
In particular, we show when the Eagon-Northcott and Buchsbaum-Rim complexes (and the other complexes in this generalized family) are virtual resolutions.

Let $X$ be a smooth projective toric variety with $S=\Cox(X)$ and irrelevant ideal $B$. Our main theorem is as follows:

\begin{theorem}\label{eisenbudvirtual}
Let $\varphi: F \to G$ be a $\Pic(X)$-graded map of free $S$-modules of ranks $f \geq g$ where $I_m(\varphi)$ denotes the ideal of $m\times m$ minors of $\varphi$, and let 
$$\mathcal{C}^i: 0 \longrightarrow F_e \overset{\varphi_e}{\longrightarrow} F_{e-1} \overset{\varphi_{e-1}}{\longrightarrow} \cdots \overset{\varphi_2}{\longrightarrow} F_1 \overset{\varphi_1}{\longrightarrow} F_0$$
be one of the generalized Eagon-Northcott complexes of $\varphi$ with $i\geq -1$ (see \S 2.2).
Then $\mc{C}^i$ is a virtual resolution whenever $\depth (I_m(\varphi) : B^\infty) \geq f-m+1$ for $f-e+1\leq m \leq g$. 
\end{theorem}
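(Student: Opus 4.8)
The plan is to localize at each prime not containing the irrelevant ideal $B$ and reduce to the classical acyclicity theorem for generalized Eagon--Northcott complexes. Recall that a free complex $\mc{C}$ of finitely generated $S$-modules is a virtual resolution precisely when the associated complex of $\O_X$-modules is exact in every positive homological degree; since a finitely generated module sheafifies to zero exactly when it is supported on $V(B)$, this is equivalent to requiring $H_j(\mc{C})_{\mf{p}} = 0$ for all $j \geq 1$ and all primes $\mf{p}$ of $S$ with $B \not\subseteq \mf{p}$. Fix such a prime $\mf{p}$. Localization is exact, and each generalized Eagon--Northcott complex is built functorially from $\varphi$ out of exterior, symmetric, and divided powers, so $(\mc{C}^i)_{\mf{p}}$ is canonically the generalized Eagon--Northcott complex of the localized map $\varphi_{\mf{p}} \colon F_{\mf{p}} \to G_{\mf{p}}$ over the Noetherian local ring $S_{\mf{p}}$, and $H_j(\mc{C}^i)_{\mf{p}} \cong H_j\bigl((\mc{C}^i)_{\mf{p}}\bigr)$. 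By the classical acyclicity criterion applied over $S_{\mf{p}}$, it then suffices to check that $\depth\bigl(I_m(\varphi_{\mf{p}}),\, S_{\mf{p}}\bigr) \geq f - m + 1$ for $f - e + 1 \leq m \leq g$.

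The only substantive point is the resulting depth comparison, and it is brief. Since $B \not\subseteq \mf{p}$, some element of $B$ becomes a unit in $S_{\mf{p}}$, so $B\,S_{\mf{p}} = S_{\mf{p}}$, and because saturation commutes with localization, $I_m(\varphi_{\mf{p}}) = I_m(\varphi)_{\mf{p}} = \bigl(I_m(\varphi) : B^{\infty}\bigr)_{\mf{p}}$. On the other hand, writing $J = I_m(\varphi) : B^{\infty}$, the grade of an ideal never drops under localization --- immediate from $\depth(J, S) = \min\{\, i : \mathrm{Ext}^i_S(S/J, S) \neq 0\,\}$ and the fact that $\mathrm{Ext}$ commutes with localization --- so $\depth(J_{\mf{p}},\, S_{\mf{p}}) \geq \depth(J, S)$, where as usual the grade of the unit ideal is $+\infty$ (if $J_{\mf{p}} = S_{\mf{p}}$ the condition for that $m$ is automatic). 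Chaining these with the hypothesis $\depth\bigl(I_m(\varphi) : B^{\infty}\bigr) \geq f - m + 1$ gives $\depth\bigl(I_m(\varphi_{\mf{p}}),\, S_{\mf{p}}\bigr) \geq f - m + 1$ for every $m$ in the required range.

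Consequently, for every prime $\mf{p} \not\supseteq B$ the localized complex $(\mc{C}^i)_{\mf{p}}$ satisfies the Buchsbaum--Eisenbud hypotheses over $S_{\mf{p}}$ and is acyclic, so $H_j(\mc{C}^i)_{\mf{p}} = 0$ for all $j \geq 1$; hence each $H_j(\mc{C}^i)$ with $j \geq 1$ is supported on $V(B)$, which is exactly the assertion that $\mc{C}^i$ is a virtual resolution. I expect the genuine difficulties to be organizational rather than conceptual: carefully matching the indexing of the generalized Eagon--Northcott complexes (the length $e$, the parameter $i \geq -1$, and the resulting range $f - e + 1 \leq m \leq g$) against the precise form of the classical acyclicity theorem recalled in \S 2, and fixing once and for all that ``$\depth$ of an ideal'' denotes the grade $\min\{\, i : \mathrm{Ext}^i_S(S/I, S) \neq 0\,\}$ so that it localizes correctly and is insensitive to the $\Pic(X)$-grading. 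The depth comparison above is the single essential input, and as indicated it is elementary.
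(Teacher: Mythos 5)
Your proof is correct, and it takes a genuinely different route from the paper. The paper's argument invokes Loper's virtual Buchsbaum--Eisenbud criterion (Theorem~\ref{loper}) as a black box: it first proves Lemma~\ref{part b}, a saturated analogue of Eisenbud's Theorem~A2.10b concerning ranks and radicals of the ideals $I(\varphi_j)$, then verifies Loper's two rank-and-depth conditions for each differential $\varphi_j$ of $\mc{C}^i$. Your argument bypasses both Loper's criterion and Lemma~\ref{part b} entirely. You observe that being a virtual resolution is equivalent to $H_j(\mc{C}^i)_{\mf{p}} = 0$ for all $j \geq 1$ and all $\mf{p} \not\supseteq B$, note that the generalized Eagon--Northcott construction commutes with localization so $(\mc{C}^i)_{\mf{p}}$ is the corresponding complex for $\varphi_{\mf{p}}$ over $S_{\mf{p}}$, and then check that the hypothesis of the classical Theorem~A2.10c holds over $S_{\mf{p}}$. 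The two ingredients you need --- that $I_m(\varphi)_{\mf{p}} = (I_m(\varphi):B^\infty)_{\mf{p}}$ when $B\not\subseteq\mf{p}$, and that grade (in the $\mathrm{Ext}$-vanishing sense) does not drop under localization --- are both stated and justified correctly.

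What each approach buys: your localization argument is more self-contained and arguably more transparent, since it reduces directly to the classical acyclicity theorem without rederiving rank and radical bookkeeping in the virtual setting; in effect it reproves the relevant case of Loper's criterion on the fly. The paper's route makes the structural parallel explicit --- Loper's theorem is the virtual analogue of the Buchsbaum--Eisenbud acyclicity criterion, Lemma~\ref{part b} is the virtual analogue of A2.10b, and Theorem~\ref{eisenbudvirtual} is the virtual analogue of A2.10c --- which reads as a template for producing further virtual analogues of classical acyclicity results. Both are valid proofs of the theorem as stated, and they reach the same index bookkeeping ($s(j) = \min(g, f-j+1)$ ranging over $f-e+1 \leq m \leq g$) from opposite directions.

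One small point worth making explicit if you write this up: when you invoke the equivalence between ``$\widetilde{\mc{C}}$ is a locally free resolution of $\widetilde{M}$'' and ``$H_j(\mc{C})_{\mf{p}} = 0$ for all $j \geq 1$ and all $\mf{p} \not\supseteq B$,'' you are using that a finitely generated $\Pic(X)$-graded $S$-module sheafifies to zero on $X$ if and only if it is $B$-torsion, which is equivalent to being supported on $V(B)$. This is standard for Cox rings of smooth toric varieties, but since it is the hinge of the reduction it deserves a citation or a sentence.
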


Notably, if $\depth (I_g(\varphi): B^\infty)\geq f-g+1$, then each of the complexes $\mc{C}^i$ with $-1\leq i \leq f-g+1$ (including the Eagon-Northcott and Buchsbaum-Rim) is a virtual resolution of length $f-g+1$ by Theorem \ref{eisenbudvirtual}. The proof of Theorem \ref{eisenbudvirtual} involves a combination of the corresponding methods from \cite{Eisenbud2004}, Loper's criterion for when a complex is a virtual resolution \cite{Loper2019WhatMA}, and several concrete arguments to address how these auxiliary results interact with saturation. 

To illustrate its use, consider the graph of the twisted cubic in the following example.

\begin{example}[Graph of the twisted cubic]\label{twistedcubic}
    Let $X=\P^1\times \P^3$ with $S=k[x_0,x_1,y_0,y_1,y_2,y_3]$. This is $\Pic(X)= \Z^2$-graded where $\deg(x_i) = (1,0)$ and $\deg(y_i)=(0,1)$. The irrelevant ideal $B$ is $\ideal{x_0,x_1} \cap \ideal{y_0,y_1,y_2, y_3}$. Consider the following map of $\Pic(X)$-graded $S$-modules:
    \begin{center}
        \begin{tikzcd}
            \varphi \coloneqq 
    \begin{bmatrix}
    x_0^3 & x_0^2x_1 & x_0x_1^2 & x_1^3\\
    y_0 & y_1 & y_2 & y_3
    \end{bmatrix}: S(-3,-1)^4 \ar[r] & \begin{matrix} S(0,-1)\\
            \oplus\\
            S(-3,0)
            \end{matrix}
        \end{tikzcd}
    \end{center}
    The variety in $X$ defined by $I_2(\varphi)$, the ideal of $2\times 2$ minors of $\varphi$, is precisely the graph of the embedding of the twisted cubic curve defined by $[s:t] \mapsto ([s:t],[s^3:s^2t:st^2:t^3])$.
    The Eagon-Northcott complex $\mc{C}^0$ is computed to be
    \begin{center}
    \begin{tikzcd}
        \mc{C}^0:
        0 \ar[r] &
        \begin{matrix}
                S(-9,-1)\\ \oplus \\ S(-6,-2)\\ \oplus\\ S(-3,-3)
        \end{matrix}
        \ar[r] & 
        \begin{matrix}
                S(-6,-1)^4\\ \oplus \\ S(-3,-2)^4
        \end{matrix}
        \ar[r] &
        S(-3,-1)^6 
        \ar[r, "I_2(\varphi)"] &
        S
    \end{tikzcd}
\end{center}
If $\depth I_2(\varphi) = 3$, then Theorem A2.10c in \cite{Eisenbud2004} would ensure that $\mc{C}^0$ is a free resolution of $S/I_2(\varphi)$. However, since $\ideal{x_0,x_1}$ is a minimal prime of $I_2(\varphi)$, we have $\depth I_2(\varphi) \leq 2$, and one can in fact check that $H_1(\mc{C}^0)\neq 0$ (see Example \ref{degreedexample}). A similar computation shows that the Buchsbaum-Rim complex $\mc{C}^1$ is not a free resolution of $\coker \varphi$. However, since $\depth(I_2(\varphi):B^\infty)=3$, Theorem \ref{eisenbudvirtual} implies that both $\mc{C}^0$ and $\mc{C}^1$ are virtual resolutions. 
\end{example}

This paper is organized as follows: in \S 2, we provide notation and necessary background about virtual resolutions and generalized Eagon-Northcott complexes, in \S3, we give a proof of Theorem \ref{eisenbudvirtual}, and in \S 4, we explore more examples of the utility of our result.

\subsection*{Acknowledgements}
The authors thank Daniel Erman for his valuable insight and guidance and Mahrud Sayrafi for helpful conversations. We also thank our anonymous referees for their valuable suggestions. The computer algebra system Macaulay2 \cite{M2} was used extensively, in particular the \texttt{VirtualResolutions} package \cite{Almousa_2020}.

\section{Background and Notation}

\subsection{Virtual Resolutions} 

On $\P^n$, minimal free resolutions capture geometric properties well because the maximal ideal, which plays a key role in the definition of such resolutions, is the irrelevant ideal. However, on toric varieties such as $\P^1 \times \P^1$, the irrelevant ideal $B=\ideal{x_0,x_1}\cap\ideal{y_0,y_1}$ is strictly contained in the maximal ideal. This inequality results in the minimal free resolution containing algebraic structure that is geometrically irrelevant. By focusing on the irrelevant ideal, virtual resolutions are better able to represent important geometric information.

From now on, let $X$ be a smooth projective toric variety with $S=\Cox(X)$ and irrelevant ideal $B$.\footnote{$S$ and $B$ are generalizations of the homogeneous coordinate ring and its maximal ideal, respectively. For more precise definitions, see \cite{Cox1993TheHC}*{\S 5.1}, where $\Cox(X)$ is called the total coordinate ring.} Given that there is a correspondence between $\Pic(X)$-graded $B$-saturated $S$-modules $M$ and sheaves $\widetilde{M}$ on $X$ (for more details see \S 5.2, \cite{Cox}; a generalization can be found in \cite{mustata_vanishing_2002}), allowing for some ``irrelevant homology'' in our complexes stands to make them shorter and closer linked to the geometric situation. This motivates the following definition.

\begin{defn}[\cite{BerkeschErman2017VirtualRF}]\label{VirtualResolution}
A complex $\mc{C}: \cdots \overset{\varphi_3}{\longrightarrow} F_2 \overset{\varphi_2}{\longrightarrow} F_1 \overset{\varphi_1}{\longrightarrow} F_0$ of $\Pic(X)$-graded free $S$-modules is called a \textbf{virtual resolution} of a $\Pic(X)$-graded $S$-module $M$ if the corresponding complex $\widetilde{\mc{C}}$ of vector bundles on $X$ is a locally free resolution of the sheaf $\widetilde{M}$. 
\end{defn}

Algebraically, $\mc{C}$ is a virtual resolution if all of the higher homology groups are supported on the irrelevant ideal, i.e. for each $i\geq 1$, $B^nH_i(\mc{C})=0$ for some $n$. Note that all exact complexes are virtual resolutions, but not all virtual resolutions are exact, since they allow for a specific type of homology.

Our proof of Theorem \ref{eisenbudvirtual} utilizes a result from \cite{Loper2019WhatMA} which provides a virtual analogue of Buchsbaum and Eisenbud's famous criterion for checking the exactness of a complex without directly computing the homology \cite{Buchsbaum1973WhatMA}. Given a $\Pic(X)$-graded map of free $S$-modules $\varphi: F \to G$, we can choose a matrix representation for $\varphi$. Let $I_m(\varphi)\subseteq S$ be the ideal generated by the $m \times m$ minors of this matrix, where, by convention, $I_m(\varphi)=S$ for $m\leq 0$. Note that these ideals of minors are Fitting invariants of $\coker \varphi$, independent of the choice of matrix representation \cite[\S 20.2]{Eisenbud2004}. We define the rank of $\varphi$ to be $\rank(\varphi):= \max\Set*{m \given I_m(\varphi)\neq 0}$, and we set $I(\varphi):= I_{\rank(\varphi)}(\varphi)$ to be the corresponding ideal of minors, which will play a key role in our study of $\varphi$. Loper's criterion for a complex to be a virtual resolution is as follows.

\begin{theorem}[\cite{Loper2019WhatMA}] \label{loper}
Suppose \[\mc{C}: 0 \longrightarrow F_e \overset{\varphi_e}{\longrightarrow} F_{e-1} \longrightarrow \cdots \overset{\varphi_2}{\longrightarrow} F_1 \overset{\varphi_1}{\longrightarrow} F_0\] is a $\Pic(X)$-graded complex of free $S$-modules. Then $\mc{C}$ is a virtual resolution if and only if both of the following conditions are satisfied for each $j=1,\dots, e$:
\begin{enumerate}
    \item[(i)] $\rank \varphi_j +\rank \varphi_{j+1} = \rank F_j$ \qquad (taking $\varphi_{e+1}=0$),
    \item[(ii)] $\depth (I(\varphi_j):B^\infty)\geq j$
\end{enumerate}
\end{theorem}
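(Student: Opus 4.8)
The plan is to reduce the statement to the classical Buchsbaum--Eisenbud exactness criterion applied at each ``relevant'' prime. Recall from \S 2.1 that $\mc{C}$ is a virtual resolution precisely when each $H_i(\mc{C})$ with $i\geq 1$ is annihilated by a power of $B$; since $S$ is Noetherian and $\mc{C}$ consists of finitely generated free modules, each $H_i(\mc{C})$ is finitely generated, so this happens if and only if $H_i(\mc{C})_{\mf p}=0$ for every $i\geq 1$ and every prime $\mf p$ with $B\not\subseteq\mf p$. Because localization is exact, $H_i(\mc{C})_{\mf p}=H_i(\mc{C}_{\mf p})$, so the condition becomes: $\mc{C}_{\mf p}$ is an exact complex of free $S_{\mf p}$-modules for every prime $\mf p\not\supseteq B$. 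To each such $\mc{C}_{\mf p}$ I would then apply the Buchsbaum--Eisenbud criterion \cite{Buchsbaum1973WhatMA} (see also \cite{Eisenbud2004}), which states that $\mc{C}_{\mf p}$ is exact if and only if, for each $j=1,\dots,e$, the rank condition $\rank((\varphi_j)_{\mf p})+\rank((\varphi_{j+1})_{\mf p})=\rank((F_j)_{\mf p})$ holds and $\depth_{S_{\mf p}}\bigl(I_{r_j}((\varphi_j)_{\mf p})\bigr)\geq j$, where $r_j=\rank((\varphi_j)_{\mf p})$.

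The rank conditions cause no trouble. Since $S=\Cox(X)$ is a polynomial ring, it is a domain, so $S\to S_{\mf p}$ is injective; hence $I_m(\varphi_j)_{\mf p}\neq 0\iff I_m(\varphi_j)\neq 0$, giving $\rank((\varphi_j)_{\mf p})=\rank(\varphi_j)$ for every $\mf p$, and of course $\rank((F_j)_{\mf p})=\rank(F_j)$. Thus the local rank conditions are all identical to condition (i); in particular they hold for every $\mf p\not\supseteq B$ as soon as they hold at the generic point $\mf p=(0)$ (which does not contain $B$, since $B\neq 0$). Moreover $I_{r_j}((\varphi_j)_{\mf p})=I(\varphi_j)_{\mf p}$ once $r_j=\rank\varphi_j$, so the remaining local requirement is that $\depth_{S_{\mf p}}(I(\varphi_j)_{\mf p})\geq j$ for all $j$ and all $\mf p\not\supseteq B$.

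It then remains to check that this family of local depth inequalities, quantified over all $\mf p\not\supseteq B$, is equivalent to condition (ii), i.e. to $\depth(I(\varphi_j):B^\infty)\geq j$ for all $j$. Writing $J=I(\varphi_j)$ and using that $S$ is Cohen--Macaulay, so that $\depth$ of an ideal equals its height, I would argue as follows: from a primary decomposition of $J$, the saturation $J:B^\infty$ is the intersection of those primary components whose radicals do not contain $B$, so the minimal primes of $J:B^\infty$ are exactly the minimal primes of $J$ not containing $B$; also, if $B\not\subseteq\mf p$ and $\mf q\subseteq\mf p$ then $B\not\subseteq\mf q$. Unwinding these facts: $\depth(J:B^\infty)\geq j$ $\iff$ every prime $\mf q\supseteq J$ with $B\not\subseteq\mf q$ has height $\geq j$ $\iff$ for every prime $\mf p\not\supseteq B$, every prime $\mf q$ with $J\subseteq\mf q\subseteq\mf p$ has height $\geq j$ $\iff$ for every $\mf p\not\supseteq B$, $\operatorname{ht}_{S_{\mf p}}(J_{\mf p})\geq j$ (with the unit ideal having height $\infty$) $\iff$ for every $\mf p\not\supseteq B$, $\depth_{S_{\mf p}}(J_{\mf p})\geq j$. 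Combining this with the previous paragraphs yields the theorem.

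The main obstacle is precisely this last bookkeeping step: one must verify carefully that the single global hypothesis (ii) on the saturated ideals $I(\varphi_j):B^\infty$ repackages exactly the infinite family of local depth conditions over all relevant primes, being attentive to edge cases (maps of rank $0$, where $I(\varphi_j)=S$; the grade/height conventions for the unit ideal; and the fact that Cohen--Macaulayness of $S$ is what permits passing freely between depth and height). An alternative, more self-contained route would be to prove a ``virtual acyclicity lemma'' directly --- a version of the Peskine--Szpiro acyclicity lemma in which depth is replaced by depth away from $V(B)$ --- and then run an induction on the length $e$ mirroring the classical proof; the localization argument above is essentially a way of outsourcing that induction to the already-known classical criterion.
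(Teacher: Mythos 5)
This statement is quoted from \cite{Loper2019WhatMA} and the paper gives no proof of it (it is used as a black box in the proof of Theorem \ref{eisenbudvirtual}), so there is no internal argument to compare yours against; I can only assess your proposal on its own terms, and it is correct. Your reduction is the standard one and is essentially how the cited result is established: since the homology modules are finitely generated, being annihilated by a power of $B$ is equivalent to vanishing after localization at every prime $\mf p \not\supseteq B$, so virtual exactness is equivalent to acyclicity of $\mc{C}_{\mf p}$ for all such $\mf p$, where the classical criterion of \cite{Buchsbaum1973WhatMA} applies. The two supporting points you flag are handled correctly: ranks are insensitive to localization because $S$ is a domain, so the local rank conditions are literally condition (i); and the translation of the local depth conditions into condition (ii) works because $S$ (hence each $S_{\mf p}$) is Cohen--Macaulay, so depth of an ideal may be replaced by height, and because any prime contained in a prime not containing $B$ also fails to contain $B$, so the minimal primes of $I(\varphi_j):B^\infty$ are exactly the minimal primes of $I(\varphi_j)$ away from $V(B)$ (with the unit-ideal convention covering the case where no such primes exist, e.g. when $\rank\varphi_j=0$ or the saturation is all of $S$). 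So your argument is sound; just be aware that credit for the theorem and its proof belongs to Loper, and the present paper only cites it.
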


Therefore, in order for a complex in the toric setting to be a virtual resolution, we only need to consider the depth of the $B$-saturation of the maximal ideals of minors of the complex's differentials. By convention, the depth of the unit ideal is infinity, so that condition (ii) above is satisfied if $(I(\varphi_j):B^\infty)=S$.

\subsection{Generalized Eagon-Northcott Complexes}
Let $\varphi: F \to G$ be any map of free $R$-modules where $f=\rank F \geq g=\rank G$. Associated to $\varphi$ is a family $\{\mc{C}^i\}_{i\in \Z}$ of \textit{generalized Eagon-Northcott complexes} defined in \cite[\S A2.6]{Eisenbud2004} by splicing together linear strands of particular Koszul complexes. Two of the most important of these complexes are $\mc{C}^0$, the Eagon-Northcott complex, and $\mc{C}^1$, the Buchsbaum-Rim complex, which are shown below. Here, $\bigwedge^d F$ denotes the $d^{\text{th}}$ exterior power of $F$ and $(\Sym_d(G))^*$ denotes the dual of the $d^{\text{th}}$ symmetric power of $G$.
%\vspace{-0.75em}

\begin{center}\footnotesize
\begin{tikzcd}
    \mc{C}^0:  0 \ar[r] &[-1.5em] \bigwedge^f F \otimes (\Sym_{f-g}(G))^* \ar[r] &[-1.25em] \cdots \ar[r] &[-1.25em] \bigwedge^{g+1} F \otimes (\Sym_1(G))^* \ar[r] &[-1.25em] \bigwedge^g F\otimes (\Sym_0(G))^* \ar[r, "\wedge^g \varphi"] & R \ar[r] &[-1em] 0  
\end{tikzcd}
\begin{tikzcd}
    \mc{C}^1:  0 \ar[r] &[-1.8em] \bigwedge^f F \otimes (\Sym_{f-g-1}(G))^* \ar[r] &[-1.8em] \cdots\hspace{-0.1em} \ar[r] &[-1.8em] \bigwedge^{g+2} F \otimes (\Sym_1(G))^* \ar[r] &[-1.8em] \bigwedge^{g+1} F \otimes (\Sym_0(G))^* \ar[r, "\epsilon"] &[-1.62em] F \ar[r, "\varphi"] &[-1.62em] G \ar[r] &[-1.8em] 0
\end{tikzcd}
\end{center}

 This family of complexes has unique properties, leading to their use in a wide variety of situations \cites{Schreyer1986SyzygiesOC, Gruson1983OnAT, Zamaere2013TensorCM}. By their construction, each complex is dual to another: $\mc{C}^i$ is dual to $\mc{C}^{f-g-i}$. The most interesting complexes in this family are those $\mc{C}^i$ with $-1\leq i \leq f-g+1$. These complexes are of length $f-g+1$ and are generically free resolutions \cite[Theorem A2.10c]{Eisenbud2004}.
 The $\mc{C}^i$ can be described explicitly, and they preserve homogeneity in the case that $R$ is a (multi)graded ring.
 
 \begin{example}\label{2x4example}
 Consider the homogeneous map
 \begin{center}
    \begin{tikzcd}
        \varphi \coloneqq 
    \begin{bmatrix}
    x_{11} & x_{12} & x_{13} & x_{14}\\
    x_{21} & x_{22} & x_{23} & x_{24}
    \end{bmatrix}:
    S(-1,-1)^4 \ar[r] & 
    \begin{matrix}
    S(0,-1)\\
    \oplus\\
    S(-1,0)
    \end{matrix}
    \end{tikzcd}
\end{center}
where $S=k[x_{ij}]=\Cox(\P^3\times\P^3)$ is bigraded with $\deg(x_{1j})=(1,0)$ and $\deg(x_{2j})=(0,1)$.
Let $m_{ij}$ be the $2\times 2$ minor of $\varphi$ involving columns $i$ and $j$, so that $I_2(\varphi) = \ideal{m_{ij}}$. The Eagon-Northcott complex $\mc{C}^0$ and Buchsbaum-Rim complex $\mc{C}^1$ are given by
\begin{center}\footnotesize
    \begin{tikzcd}[ampersand replacement=\&]
        \mc{C}^0:
        \begin{matrix}
        S(-3,-1)\\
        \oplus\\
        S(-2,-2)\\
        \oplus\\
        S(-1,-3)
        \end{matrix} \arrow{r}{\left[\begin{smallmatrix}
        -x_{14} & -x_{24} & 0\\
        x_{13} & x_{23} & 0\\
        -x_{12} & -x_{22} & 0\\
        x_{11} & x_{21} & 0\\
        0 & -x_{14} & -x_{24}\\
        0 & x_{13} & x_{23}\\
        0 & -x_{12} & -x_{22}\\
        0 & x_{11} & x_{21}
        \end{smallmatrix}\right]} \&[4em]
        \begin{matrix}
        S(-2,-1)^4\\
        \oplus\\
        S(-1,-2)^4
        \end{matrix} \arrow{r}{\left[\begin{smallmatrix}
        x_{13} & x_{14} & 0 & 0 & x_{23} & x_{24} & 0 & 0\\
        -x_{12} & 0 & x_{14} & 0 & -x_{22} & 0 & x_{24} & 0\\
        x_{11} & 0 & 0 & x_{14} & x_{21} & 0 & 0 & x_{24}\\
        0 & -x_{12} & -x_{13} & 0 & 0 & -x_{22} & -x_{23} & 0\\
        0 & x_{11} & 0 & -x_{13} & 0 & x_{21} & 0 & -x_{23}\\
        0 & 0 & x_{11} & x_{12} & 0 & 0 & x_{21} & x_{22}
        \end{smallmatrix}\right]} \&[14.5em]
        S(-1,-1)^6 \arrow{r}{I_2(\varphi)} \&
        S\\
    \end{tikzcd}
    \begin{tikzcd}[ampersand replacement=\&]
        \mc{C}^1:
        \begin{matrix}
        S(-3,-2)\\
        \oplus\\
        S(-2,-3)
        \end{matrix} \arrow{r}{\left[\begin{smallmatrix}
        -x_{14} & -x_{24}\\
        x_{13} & x_{23}\\
        -x_{12} & -x_{22}\\
        x_{11} & x_{21}
        \end{smallmatrix}\right]} \&[4em]
        S(-2,-2)^4 \arrow{r}{\left[\begin{smallmatrix}
        m_{23} & m_{24} & m_{34} & 0\\
        -m_{13} & -m_{14} & 0 & m_{34}\\
        m_{12} & 0 & -m_{14} & -m_{24}\\
        0 & m_{12} & m_{13} & m_{23}
        \end{smallmatrix}\right]} \&[10em]
        S(-1,-1)^4 \arrow{r}{\varphi} \&
       \begin{matrix}
        S(0,-1)\\
        \oplus\\
        S(-1,0)
        \end{matrix}
    \end{tikzcd}
\end{center}
These are minimal free resolutions of $S/I_2(\varphi)$ and $\coker\varphi$, respectively.
\end{example}

%%%%%%%%%%%%%%%%%%%%%%%%%%%%%%%%
\section{Main Result and Proof}
%%%%%%%%%%%%%%%%%%%%%%%%%%%%%%%

Our main result gives a sufficient criterion for the generalized Eagon-Northcott complexes $\{\mc{C}^i\}_{i\geq-1}$ of a $\Pic(X)$-graded map of free $S$-modules $\varphi: F \to G$ with ranks $f\geq g$ to be virtual resolutions. This is a virtual analogue of \cite[Theorem A2.10c]{Eisenbud2004} (see \S 2.2) and will require Theorem \ref{loper} to prove. Note that while one could apply Theorem \ref{loper} directly to a given $\mc{C}^i$ to determine if it is a virtual resolution, this would require checking $\depth(I(\varphi_j):B^\infty)$ for each differential in $\mc{C}^i$. The main utility of Theorem \ref{eisenbudvirtual} is that it enables one to determine that the entire family $\{\mc{C}^i\}_{i\geq -1}$ consists of virtual resolutions by only checking $\depth(I_m(\varphi):B^\infty)$ for particular $m$ and the single map $\varphi$. The proof of Theorem \ref{eisenbudvirtual} will require the following lemma, a virtual analogue of \cite[Theorem A2.10b]{Eisenbud2004}.

\begin{lemma}\label{part b}
For $i\geq -1$, let
\[\mathcal{C}^i: 0 \longrightarrow F_e \overset{\varphi_e}{\longrightarrow} F_{e-1} \overset{\varphi_{e-1}}{\longrightarrow} \cdots \overset{\varphi_2}{\longrightarrow} F_1 \overset{\varphi_1}{\longrightarrow} F_0\]
be one of the generalized Eagon-Northcott complexes of $\varphi$, and let $r(j)= \ds\sum_{\ell=j}^e (-1)^{\ell-j} \rank F_\ell$. Then for each $1\leq j \leq e$, we have $\rank \varphi_j \leq r(j)$ and $(I_{r(j)}(\varphi_j):B^\infty)$ is contained in and has the same radical as the ideal $(I_{s(j)}(\varphi):B^\infty)$, where $s(j) = \min(g,f-j+1)$.
\end{lemma}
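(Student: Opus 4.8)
The plan is to deduce the lemma from its classical, non-saturated counterpart and then to track what happens under the saturation operation $(-:B^\infty)$. Applying \cite[Theorem A2.10b]{Eisenbud2004} with $R=S$ gives exactly the rank bound $\rank\varphi_j\le r(j)$, the containment $I_{r(j)}(\varphi_j)\subseteq I_{s(j)}(\varphi)$, and the equality of radicals $\sqrt{I_{r(j)}(\varphi_j)}=\sqrt{I_{s(j)}(\varphi)}$. The rank bound involves no saturation and transfers verbatim, so the entire remaining content is that applying $(-:B^\infty)$ to the pair $I_{r(j)}(\varphi_j)\subseteq I_{s(j)}(\varphi)$ preserves both the inclusion and the equality of radicals.

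The inclusion of saturations is immediate: if $I\subseteq J$ and $fB^n\subseteq I$, then $fB^n\subseteq J$, so $(I:B^\infty)\subseteq(J:B^\infty)$. For the radicals, the one genuinely nonformal ingredient is the identity
\[
\sqrt{(I:B^\infty)}=\bigl(\sqrt{I}:B^\infty\bigr)\qquad\text{for every ideal }I\subseteq S,
\]
because it reduces the desired statement to the classical equality $\sqrt{I_{r(j)}(\varphi_j)}=\sqrt{I_{s(j)}(\varphi)}$ via
\[
\sqrt{(I_{r(j)}(\varphi_j):B^\infty)}=\bigl(\sqrt{I_{r(j)}(\varphi_j)}:B^\infty\bigr)=\bigl(\sqrt{I_{s(j)}(\varphi)}:B^\infty\bigr)=\sqrt{(I_{s(j)}(\varphi):B^\infty)}.
\]
To establish the identity I would prove both inclusions. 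For the inclusion ``$\subseteq$'', first note that the saturation of a radical ideal is again radical: if $g^kB^n\subseteq\sqrt I$, then for each $b\in B^n$ one has $b^k\in(B^n)^k\subseteq B^n$, so $(gb)^k=g^kb^k\in g^kB^n\subseteq\sqrt I$, whence $gb\in\sqrt I$ and therefore $gB^n\subseteq\sqrt I$; combining this with the monotonicity inclusion $(I:B^\infty)\subseteq(\sqrt I:B^\infty)$ and taking radicals yields $\sqrt{(I:B^\infty)}\subseteq(\sqrt I:B^\infty)$. For the inclusion ``$\supseteq$'', given $f$ with $fB^n\subseteq\sqrt I$, use that $S$ is Noetherian to write $B^n=\ideal{b_1,\dots,b_t}$ and choose $k$ with $(fb_i)^k\in I$ for every $i$; setting $N=t(k-1)+1$, every product of $N$ of the generators $fb_1,\dots,fb_t$ has, by pigeonhole, some $fb_{i_0}$ occurring at least $k$ times and so lies in $I$, hence $(fB^n)^N=f^{N}B^{nN}\subseteq I$, which gives $f^{N}\in(I:B^\infty)$ and thus $f\in\sqrt{(I:B^\infty)}$.

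I do not expect a serious obstacle: \cite[Theorem A2.10b]{Eisenbud2004} supplies all the homological input, and the remaining work is the elementary interaction between $B$-saturation and radicals. The only steps requiring any care are the uniform choice of the exponent $k$ and the pigeonhole bookkeeping in the ``$\supseteq$'' inclusion --- routine manipulations one should write out but which pose no conceptual difficulty.
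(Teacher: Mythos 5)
Your proposal is correct and follows essentially the same route as the paper: both deduce the rank bound, containment, and radical statement from \cite[Theorem A2.10b]{Eisenbud2004}, observe that saturation preserves containments, and then establish a commutation identity between saturation and radicals by direct element-chasing (the paper phrases it as $\sqrt{(I:J^\infty)}=(\sqrt{I}:J)$, which is equivalent to your $\sqrt{(I:B^\infty)}=(\sqrt{I}:B^\infty)$ since the colon ideal into a radical ideal is automatically saturated). The minor cosmetic differences are that your ``$\subseteq$'' direction factors through the observation that the saturation of a radical ideal is radical, whereas the paper argues directly with exponents, and your pigeonhole exponent is $t(k-1)+1$ versus the paper's looser $sa$; neither affects correctness.
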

\begin{proof}
    This will follow readily from \cite[Theorem A2.10b]{Eisenbud2004} once we understand how taking radicals plays with saturation. We claim that for ideals $I,J$ in a Noetherian ring $R$, $\sqrt{(I:J^\infty)}=(\sqrt{I}:J)$. (This is likely known to experts, but we provide a proof for completeness.) For the first containment, let $r \in \sqrt{(I:J^\infty)}$. Then there exists $a$ such that $r^a \in (I:J^\infty)$ so there exists $b$ such that $r^aJ^b \subseteq I$. For $c=\max\{a,b\}$ and any $j \in J$, we have $(rj)^c= r^{c-a}(r^aj^c) \in r^aJ^c \subseteq I$. Thus, $rJ \subseteq \sqrt{I}$, so $r \in (\sqrt{I}:J)$. Conversely, let $r \in (\sqrt{I}:J)$, and suppose $J = \ideal{j_1,\dots,j_s}$. Then $rj_i \in \sqrt{I}$ and we can choose $a\gg 0$ such that $r^aj_i^a\in I$ for each $i$. Since $J^{sa} \subseteq \ideal{j_i^a,\dots,j_s^a}$, we have $r^a J^{sa} \subseteq I$. Thus, $r^a \in (I:J^{\infty})$ and $r \in \sqrt{(I:J^\infty)}$.
    
    By \cite[Theorem A2.10b]{Eisenbud2004}, $\rank \varphi_j \leq r(j)$, $I_{r(j)}(\varphi_j) \subseteq I_{s(j)}(\varphi)$, and  $\sqrt{I_{r(j)}(\varphi_j)} = \sqrt{I_{s(j)}(\varphi)}$. Therefore, $(I_{r(j)}(\varphi_j):B^\infty) \subseteq (I_{s(j)}(\varphi):B^\infty)$, and the above argument gives that $\sqrt{(I_{r(j)}(\varphi_j):B^\infty)} = (\sqrt{I_{r(j)}(\varphi_j)}: B) = (\sqrt{I_{s(j)}(\varphi)}:B) = \sqrt{(I_{s(j)}(\varphi): B^\infty)}$.
\end{proof}

\begin{proof}[Proof of Theorem \ref{eisenbudvirtual}]
    First, note that by the construction of the $\mc{C}^i$ the length satisfies $f-g+1\leq e\leq f$. To show that $\mc{C}^i$ is a virtual resolution, we show that (i) and (ii) from Theorem \ref{loper} hold for all $j=1,\dots, e$. The first property to show is that $\rank \varphi_j+\rank \varphi_{j+1} = \rank F_j$. Since 
    \begin{align*}
        r(j+1) + r(j) &= \left(\rank F_{j+1} - \rank F_{j+2} + \cdots + (-1)^{e-(j+1)}\rank F_e \right) +\cdots \\
        &\hspace{0.5in} \cdots + \left( \rank F_{j} - \rank F_{j+1} + \cdots + (-1)^{(e-j)}\rank F_e \right)\\
        &= \rank F_{j},
    \end{align*}
    condition (i) will follow if $\rank \varphi_j = r(j)$. By Lemma \ref{part b}, we have $\rank \varphi_j \leq r(j)$. Suppose instead that $\rank \varphi_j < r(j)$. Then, since $\rank \varphi_j$ is the largest ideal of minors of $\varphi_j$ that is nonzero, we have that $I_{r(j)}(\varphi_j) = 0$. Note that $\depth (I_{r(j)}( \varphi_j) : B^\infty) = \depth (I_{s(j)}(\varphi) : B^\infty)$ since the depth of ideals is preserved under taking radicals (\cite[Cor. 17.8b]{Eisenbud2004}) and these ideals have the same radical by Lemma \ref{part b}. Thus, $\depth (I_{s(j)}(\varphi) : B^\infty) = 0$. However, since $f-e+1 \leq s(j) \leq g$ for each $j$, this contradicts our assumption that $\depth (I_{s(j)}(\varphi) : B^\infty)\geq f-s(j)+1 \geq 1$. Thus, $\rank \varphi_j = r(j)$ for each $j=1,\dots, e$.
    
    For (ii), we need to show that $\depth(I(\varphi_j):B^\infty)\geq j$ for each $j=1,\dots,e$. Since $\rank \varphi_j = r(j)$ by the above argument, we have $I_{r(j)}(\varphi_j) = I_{\rank \varphi_j}(\varphi_j) = I(\varphi_j)$. Saturating then gives $(I_{r(j)}(\varphi_j):B^\infty) = (I(\varphi_j):B^\infty)$. Since $\sqrt{(I_{r(j)}(\varphi_j):B^\infty)} = \sqrt{(I_{s(j)}(\varphi):B^\infty)}$ by Lemma \ref{part b}, we have $\depth (I(\varphi_j):B^\infty) = \depth(I_{r(j)}(\varphi_j):B^\infty) =  \depth(I_{s(j)}(\varphi):B^\infty)$. In the case that $s(j)=g$, our assumption gives that $\depth (I(\varphi_j):B^\infty) = \depth(I_{s(j)}(\varphi):B^\infty) \geq f-g+1 \geq j$ since $g\leq f-j+1$.
    Otherwise, if $s(j)=f-j+1$, then $\depth (I(\varphi_j) : B^\infty)\geq f-(f-j+1)+1=j$ since $f-e+1\leq s(j)\leq g$. In any case, $\depth (I(\varphi_j):B^\infty) \geq j$. Thus, $\mc{C}^i$ is a virtual resolution by Theorem \ref{loper}.
\end{proof}

\begin{remark} \label{remark}
If $\depth (I_g(\varphi): B^\infty)\geq f-g+1$, then the complexes $\mc{C}^i$ with $-1\leq i \leq f-g+1$ are virtual resolutions of length $f-g+1$ by Theorem \ref{eisenbudvirtual}. In particular, $\mc{C}^{-1}$ is a virtual resolution of $\bigwedge^{f-g+1}(\coker \varphi^*)$, the Eagon-Northcott complex $\mc{C}^0$ is a virtual resolution of $S/I_g(\varphi)$, the Buchsbaum-Rim complex $\mc{C}^1$ is a virtual resolution of $\coker \varphi$, and $\mc{C}^i$ is a virtual resolution of $\Sym_i(\coker \varphi)$ when $1< i \leq f-g+1$.
\end{remark}

\section{Examples of Virtual Resolutions}

Theorem \ref{eisenbudvirtual} gives a new way of producing virtual resolutions, especially those which are not themselves free resolutions. One aspect that is new is that our virtual resolutions are not constructed by paring down a minimal free resolution, as is the case in the construction of a virtual resolution of a pair in \cite{BerkeschErman2017VirtualRF}. In particular, Theorem \ref{eisenbudvirtual} tells us that we can restrict our search to finding $\Pic(X)$-graded maps $\varphi$ of free $S$-modules where $\depth I_g(\varphi) < f-g+1$ but $\depth(I_g(\varphi):B^\infty)\geq f-g+1$. Note that while the depth of $I_g(\varphi)$ is bounded above by $f-g+1$, saturating allows for the depth to increase, potentially to infinity if $(I_g(\varphi):B^\infty)=S$. Under these conditions, we know that the Eagon-Northcott and Buchsbaum-Rim, along with the other complexes $\mc{C}^i$ with $-1\leq i\leq f-g+1$, are virtual resolutions which may not be free resolutions. If, in addition, $\depth(I_m(\varphi):B^\infty)\geq f-m+1$ for all $1\leq m\leq g$, then Theorem \ref{eisenbudvirtual} ensures that the remaining $\mc{C}^i$ with $i>f-g+1$ are virtual resolutions as well.

\begin{example}[Graph of a Hirzebruch]
Consider the Hirzebruch surface $\ms{H}_1=\P(\O_{\P^1} \oplus \O_{\P^1}(1))$ with projective coordinates $[x_0:x_1:x_2:x_3]$ and Cox ring $C=k[x_0,x_1,x_2,x_3]$ with $\deg(x_0)=\deg(x_2)=(1,0)$, $\deg(x_3)=(0,1)$, and $\deg(x_1)=(-1,1)$ (see \cite[pg. 112]{Cox}). The smallest ample line bundle on the Hirzebruch is $\O_{\ms{H}_1}(1,1)$, and its global sections are given by
\[ H^0(\ms{H}_1,\O_{\ms{H}_1}(1,1)) = C_{(1,1)} = k\langle x_0x_3,x_2x_3, x_0^2x_1,x_0x_1x_2,x_1x_2^2\rangle. \]
That is, its global sections are spanned by all combinations of generators whose total bi-degree is $(1,1)$. Since $\O_{\ms{H}_1}(1,1)$ is actually globally generated by these sections, we get a map
$\varphi: \ms{H}_1 \to \Proj(C_{(1,1)}) = \P^4$ defined by $\begin{bmatrix} x_0x_3 & x_2x_3 & x_0^2x_1 & x_0x_1x_2 & x_1x_2^2 \end{bmatrix}$.

Let $X=\ms{H}_1\times \P^4$, so that our Cox ring is $S = C \tensor k[z_0,z_1,z_2,z_3,z_4]$ with grading $\deg(x_0)=\deg(x_2) = (1,0,0)$, $\deg(x_3)=(0,1,0)$, and  $\deg(x_1)=(-1,1,0)$ inherited from $C$ and $\deg(z_i)=(0,0,1)$. This has irrelevant ideal $B= \langle x_0,x_2 \rangle \cap \langle x_1,x_3 \rangle \cap \langle z_0,z_1,z_2,z_3,z_4\rangle$. Consider the $\Pic(X)$-graded map
\begin{center}
    \begin{tikzcd}
    \psi \coloneqq
        \begin{bmatrix}
            x_0x_3 & x_2x_3 & x_0^2x_1 & x_0x_1x_2 & x_1x_2^2\\
            z_0 & z_1 & z_2 & z_3 & z_4
        \end{bmatrix}
        : S(-1,-1,-1)^5 \ar[r] &
        \begin{matrix}
            S(0,0,-1)\\
            \oplus\\
            S(-1,-1,0)
        \end{matrix}
    \end{tikzcd}
\end{center}
The variety in $X$ defined by $I_2(\psi)$ is precisely the graph of $\varphi$ constructed above. Then $\depth I_2(\psi) = 2$ and $\depth(I_2(\psi):B^\infty) = 4$ so by Remark \ref{remark}, the Eagon-Northcott complex $\mc{C}^0$ and the Buchsbaum-Rim complex $\mc{C}^1$ are virtual resolutions of $S/I_2(\psi)$ and $\coker \psi$, respectively:

\begin{center} \tiny
\begin{tikzcd}
    \mc{C}^0: 0 \ar[r] & 
    \begin{matrix}
    S(-4,-4,-1)\\
    \oplus \\
    S(-3,-3,-2)\\
    \oplus\\
    S(-2,-2,-3)\\
    \oplus\\
    S(-1,-1,-4)
    \end{matrix} \ar[r] &
    \begin{matrix}
    S(-3,-3,-1)^5\\
    \oplus\\
    S(-2,-2,-2)^5\\
    \oplus\\
    S(-1,-1,-3)^5
    \end{matrix}\ar[r] &
    \begin{matrix}
    S(-2,-2,-1)^{10}\\
    \oplus\\
    S(-1,-1,-2)^{10}
    \end{matrix} \ar[r] &
    S(-1,-1,-1)^{10} \ar[r,"I_2(\psi)"] & S\\
    
    \mc{C}^1: 0 \ar[r] & 
    \begin{matrix}
    S(-4,-4,-2)\\
    \oplus\\
    S(-3,-3,-3)\\
    \oplus\\
    S(-2,-2,-4)
    \end{matrix} \ar[r] &
    \begin{matrix}
    S(-3,-3,-2)^5\\
    \oplus\\
    S(-2,-2,-3)^5
    \end{matrix}\ar[r] &
    S(-3,-3,-3)^5 \ar[r] &
    S(-1,-1,-1)^5 \ar[r,"\psi"] & 
    \begin{matrix}
    S(0,0,-1)\\
    \oplus\\
    S(-1,-1,0)
    \end{matrix}
\end{tikzcd}
\end{center}
However, one can compute nonzero elements of $H_1(\mc{C}^0)$ and $H_1(\mc{C}^1)$, so these are virtual resolutions which are not exact.
\end{example}

\begin{example}
Let $X=\P^1\times\P^2\times\P^2$ with $S=k[x_0,x_1,y_0,y_1,y_2,z_0,z_1,z_2]$, which is $\Pic(X)=\Z^3$-graded with $\deg(x_i)=(1,0,0), \deg(y_i)=(0,1,0)$, and $\deg(z_i)=(0,0,1)$ and has $B = \ideal{x_0,x_1}\cap \ideal{y_0,y_1,y_2}\cap \ideal{z_0,z_1,z_2}$. Consider the $\Pic(X)$-graded map:
\begin{center}
    \begin{tikzcd}
    \varphi \coloneqq
        \begin{bmatrix}
            x_0^4 & x_0^3x_1 & x_0^2x_1^2 & x_0x_1^3 & x_1^4\\
            0 & y_0^2 & y_1^2 & y_2^2 & 0\\
            z_0 & z_1 & z_2 & z_1 & z_0
        \end{bmatrix}
        : S(-4,-2,-1)^5 \ar[r] &
        \begin{matrix}
            S(0,-2,-1)\\
            \oplus\\
            S(-4,0,-1)\\
            \oplus\\
            S(-4,-2,0)
        \end{matrix}
    \end{tikzcd}
\end{center}
Here, we have that $\depth I_3(\varphi) = 2$ and $\depth(I_3(\varphi):B^\infty)=3$, so Remark \ref{remark} gives that each $\mc{C}^i$ with $-1\leq i\leq 3$ is a virtual resolution, and one can check that these are not exact. 
\end{example}

% Our final example generalizes Example \ref{twistedcubic} to the degree $d$ Veronese embedding of $\P^1$.

\begin{example}[Graph of the degree $d$ rational normal curve] \label{degreedexample}
Let $X=\P^1 \times \P^d$ with $S = k[x_0,x_1,y_0,\dots,y_d]$ and $d\geq 3$. Then $S$ is $\Pic(X)=\Z^2$-graded with $\deg(x_i)=(1,0)$ and $\deg(y_i) = (0,1)$, and the irrelevant ideal $B$ is $\ideal{x_0,x_1} \cap \ideal{y_0,\dots,y_d}$. Consider the following map of $\Pic(X)$-graded $S$-modules:

\begin{center}
        \begin{tikzcd}
            \varphi \coloneqq 
    \begin{bmatrix}
    x_0^d & x_0^{d-1}x_1 & \cdots & x_0x_1^{d-1} & x_1^d\\
    y_0 & y_1 & \cdots & y_{d-1} & y_d
    \end{bmatrix}: S(-d,-1)^{d+1} \ar[r] & \begin{matrix} S(0,-1)\\
            \oplus\\
            S(-d,0)
            \end{matrix}
        \end{tikzcd}
    \end{center}

The variety in $X$ defined by $I_2(\varphi)$ is the graph of the embedding of the degree $d$ rational normal curve into projective space. Let $f_{i,j}$ denote the $2\times 2$ minor of $\varphi$ involving columns $i$ and $j$ (starting at 0) so that $I_2(\varphi) = \ideal{f_{i,j}\, |\, 0\leq i<j\leq d}$. Note that $\depth I_2(\varphi) \leq 2$ since $\ideal{x_0,x_1}$ is a codimension 2 associated prime of $I_2(\varphi)$ (so Theorem A2.10c from \cite{Eisenbud2004} does not apply). The Eagon-Northcott complex of $\varphi$ is given by
\begin{center} \tiny
\begin{tikzcd}
    \mc{C}^0: 0 \ar[r] &[-0.5em] 
    \ds\bigoplus_{j=1}^d S(-jd,-(d+1-j))^{\binom{d+1}{d+1}} \ar[r] &[-1em]
    % \bigoplus_{j=1}^{d-1} S(-jd,-(d-j))^{\binom{d+1}{d}} \ar[r] &
    \cdots \ar[r] &[-1em]
    \ds\bigoplus_{j=1}^i S(-jd,-(i+1-j))^{\binom{d+1}{i+1}} \ar[r] &[-1em]
    \cdots \ar[r] &[-1em]
    S(-d,-1)^{\binom{d+1}{2}} \ar[r,"I_2(\varphi)"] &[-0.5em] S
\end{tikzcd}
\end{center}
and we wish to show that this is a virtual resolution of $S/I_2(\varphi)$ which is not a free resolution.

Let $J \subseteq S$ be the ideal
\[ J = I_2 \left(\begin{bmatrix} x_0 & y_0 & \cdots & y_{d-1}\\ x_1 & y_1 & \cdots & y_d\end{bmatrix} \right)\]
which also defines the graph of the degree $d$ rational normal curve, and let $g_{i,j}$ for $0\leq i<j\leq d$ be the $2 \times 2$ minors defining $J$. The following relations show that $J \subseteq (I_2(\varphi):\ideal{x_0,x_1}^\infty) \subseteq (I_2(\varphi):B^\infty)$.
\begin{align*}
    x_0^{d+j-2}\cdot g_{0,j} &= x_0^{j-i}f_{0,j}-x_0^{j-2}x_1f_{0,j-1} & \text{for  } 1\leq j\leq d\\
    x_0^{d-i+2}x_1^{i-1}\cdot g_{i,j} &= x_0y_{i-1}f_{i-1,j}-x_1y_{i-1}f_{i-1,j-1}-x_0y_{j-1}f_{i-1,i} \qquad & \text{for  } 1\leq i<j\leq d
\end{align*}
Since $J$ is an ideal defining a rational normal scroll, we know that $S/J$ is Cohen-Macaulay, so $\depth J = d$. Therefore, $\depth (I_2(\varphi):B^\infty)\geq d$, and thus, Remark \ref{remark} implies that each of the generalized Eagon-Northcott complexes $\mc{C}^i$ for $-1\leq i\leq d$ is a virtual resolution. In particular, the Eagon-Northcott complex $\mc{C}^0$ is a virtual resolution of $S/I_2(\varphi)$ that is not exact since the relation $x_0^2f_{1,2}-x_0x_1f_{0,2}+x_1^2f_{0,1}=0$ gives a nonzero element of $H_1(\mc{C}^0)$.
\end{example}

\bibliography{bib}{}

\end{document}